\documentclass[12pt,reqno]{amsart}
\usepackage{amsmath, amsfonts, amssymb, amsthm, cite}
\textwidth=420pt \evensidemargin=0pt \oddsidemargin=0pt

\def\Z{\mathbb Z}

\def\N{\mathbb N}

\def\1{{\bf 1}}

\def\pmod #1{\ ({\rm{mod}}\ #1)}

\def\qbinom #1#2#3{{\genfrac{[}{]}{0pt}{}{#1}{#2}}_{#3}}
\theoremstyle{plain}
\newtheorem{theorem}{Theorem}

\newtheorem{lemma}{Lemma}
\newtheorem{corollary}{Corollary}

\theoremstyle{definition}

\theoremstyle{remark}

\allowdisplaybreaks[1]

\begin{document}

\title{Some $q$-congruences involving central $q$-binomial coefficients}

\begin{abstract}
Suppose that $p$ is an odd prime and $m$ is an integer not divisible
by $p$. Sun and Tauraso [Adv. in Appl. Math., 45(2010), 125--148]
gave $\sum_{k=0}^{n-1}\binom{2k}{k+d}/m^k$ and
$\sum_{k=0}^{n-1}\binom{2k}{k+d}/(km^k)$ modulo $p$ for all $d=0,1,
\ldots n$ and $n= p^a$, where $a$ is a positive integer. In this
paper, we present some $q$-analogues of these congruences in the
cases $m=2, 4$ for any positive integer $n$.
\end{abstract}
\author{He-Xia Ni}
\address{Department of Applied Mathematics, Nanjing Audit University\\Nanjing 211815,
People's Republic of China}
\email{nihexia@yeah.net}

\keywords{ Congruence; $q$-Binomial coefficient; Cyclotomic polynomial}

\subjclass[2010]{Primary 11B65; Secondary 05A10, 05A30, 11A07}
\thanks{The work is supported by the Natural Science Foundation of the Higher Education Institutions of Jiangsu Province (20KJB110023) and the National Natural Science Foundation of China (Nos. 12001279 ).  }

\maketitle

\section{Introduction}
\setcounter{theorem}{0}\setcounter{lemma}{0}\setcounter{equation}{0}
A central binomial coefficient has the form $\binom{2n}{n}$ with
$n\in \N =\{0,1,\ldots\}$.
The well-known Fibonacci sequence ${F}_{n\in\N}$ is defined by
$$
F_0=0, F_1=1, {\rm and}\  F_{n+1}=F_n+F_{n-1} \ {\rm for} \ n=1,2,3,\ldots.
$$
Let $p$ be an odd prime.
Sun and Tauraso
\cite{SunR}  proved for any $m\in \Z$ with $p\nmid m$,
\begin{align}
\sum_{k=0}^{p^a-1}(-1)^k\binom{2k}{k+d}&\equiv (-1)^{d-1+\delta_{p,5}}F_{2(|d|-(\frac{p^a}{5}))}\pmod{p},\label{qb2}\\
\sum_{k=0}^{p^a-1}\frac{\binom{2k}{k+d}}{m^k}&\equiv u_{p^a-|d|}(m-2)\pmod{p},\label{qb3}\\
|d|\sum_{k=0}^{p^a-1}\frac{\binom{2k}{k+d}}{km^{k-1}}&\equiv 2(-1)^d+v_{p^a-|d|}(m-2)\pmod{p}\label{qb4},
\end{align}
where $|d| \in \{0, \ldots p^a\}$ with $a\in \Z^{+}$,  $\delta_{i,j}$ is the Kronecker delta, and the two linear recurrences $\{u_{n}(x)\}_{n\in \N}$ and $\{v_{n}(x)\}_{n\in \N}$ of polynomials are defined as follows:
$$ u_0(x)=0,\ u_1(x)=1,  {\rm and}\ u_{n+1}(x)=xu_n(x)-u_{n-1}(x)\ (n=1,2,\ldots),$$
$$ v_0(x)=2,\ v_1(x)=x,  {\rm and}\ v_{n+1}(x)=xv_n(x)-v_{n-1}(x)\ (n=1,2,\ldots),$$
and
$$\ u_{-1}(x)=xu_0(x)-u_1(x)=-1, v_{-1}(x)=xv_0(x)-v_1(x)=x.$$

Later Sun and Tauraso \cite{SunR1} also proved
\begin{align}
\sum_{k=0}^{p^a-1}\binom{2k}{k+d}&\equiv \left(\frac{p^a-|d|}{3}\right)\pmod{p},\label{qb1}
\end{align}
where $\big(\frac{.}{p}\big)$ denotes the Legendre symbol.
The $a=1$ case of this congruence was first proved by Pan and Sun \cite[Theorem 1.2]{PSun} via a sophisticated combinatorial identity.

It is well known that binomial identities or congruences usually have nice $q$-analogues (see \cite{An2, An1, An, GZ1, Guo-ijnt,Guo-new, Guo-diff, GZ5,  GZ6, GL, GS1, GS2,  GZ, GuoZu3, Li, LW, LP, NP2, R1, WY0}).
John Green \cite{Gr} proved a conjecture made by Daan Krammer. The identity was served as an inspiration for searching $q$-analogues of some identities in \cite{SunR, SunR1}.
In particular, Tauraso \cite{GZ1} proved the following generalization of (\ref{qb1}):
\begin{align*}
\sum_{k=0}^{n-1}q^k \qbinom{2k}{k+d}{}\equiv
    \left(\frac{n-|d|}{3}\right)q^{3r(r+1)/2+|d|(2r+1)}\pmod{\Phi_n(q)}
\end{align*}
with $r=\lfloor2(n-|d|)/3\rfloor.$  Liu and Petrov \cite{LP} proved that, for any positive integer $n$,
\begin{align*}
\sum_{k=0}^{n-1}q^k \qbinom{2k}{k}{}\equiv
    \left(\frac{n}{3}\right)q^{(n^2-1)/3}\pmod{\Phi_n(q)^2},
\end{align*}
which was originally conjectured by Guo \cite{Guo-ijnt}.
Here and in what follows, the {\it $n$-th cyclotomic polynomial} is defined as
$$
\Phi_n(q):=\prod_{\substack{1\leq k\leq n\\ \gcd(n,k)=1}}(q-e^{2\pi\sqrt{-1}\cdot\frac{k}{n}}).
$$
It is well known that $\Phi_d(q)$ is an irreducible polynomial with integral coefficients, and
$$
[n]=\prod_{\substack{d\geq 2\\ d\mid n}}\Phi_d(q).
$$
So, for $d\geqslant 2$, $\Phi_d(q)$ divides $[n]$ if and only if $d$ divides $n$.

Furthermore, the $q$-binomial coefficient $\qbinom{n}{k}{}$ is defined as
\begin{align*}
\qbinom{n}{k}{}=\qbinom{n}{k}{q}=\begin{cases}\dfrac{(q;q)_n}{(q;q)_{k}(q,q)_{n-k}} &\text{if }0\leqslant k\leqslant n,\\[10pt]
0 &\text{otherwise },
\end{cases}
\end{align*}
where
$$
(x;q)_n=\begin{cases}
(1-x)(1-xq)\cdots(1-xq^{n-1}), &\text{if }n\geq 1,\\[5pt]
1, &\text{if }n=0.
\end{cases}
$$

Guo and Zeng \cite{GZ1}  proved that if  $n\geq 1$ and $d= 0,1,\ldots, n-1,$  then
\begin{align}\label{GuoZeng1}
&\sum_{k=0}^{n-1}q^{-k(k+3)/2}\qbinom{2k}{k+d}{}(-q^{k+1};q)_{n-1-k}\nonumber\\
&\equiv\begin{cases} (-1)^{(n+d-2)/2}q^{(5-(n-d+1)^2)/4}(1-q^{n-d})\pmod{\Phi_n(q)}&\text{if }n\equiv d\pmod{2},\\[5pt]
(-1)^{(n+d-1)/2}q^{(5-(n-d)^2)/4} \pmod{\Phi_n(q)}&\text{if }n\not\equiv d\pmod{2},
\end{cases}
\end{align}
and
\begin{align}\label{GuoZeng2}
&\sum_{k=0}^{n-1}q^{-k(k+3)/2}\qbinom{2k}{k+d}{}(-q^{k};q)_{n-1-k}\nonumber\\
&\equiv\begin{cases} (-1)^{(n+d-2)/2}q^{(9-(n-d+1)^2)/4}\frac{1-q^{n-d}}{1+q}\pmod{\Phi_n(q)}&\text{if }n\equiv d\pmod{2},\\[5pt]
(-1)^{(n+d-1)/2}q^{(5-(n-d)^2)/4}\pmod{\Phi_n(q)} &\text{if }n\not\equiv d\pmod{2}.
\end{cases}
\end{align}
Clearly, \eqref{GuoZeng1} and \eqref{GuoZeng2} are partial $q$-analogues of \eqref{qb3}.

In this paper, we shall consider some $q$-analogues of \eqref{qb3} and \eqref{qb4} as well as some variations of them, which are
different from \eqref{GuoZeng1} and \eqref{GuoZeng2}.
Our main results can be stated as follows.

\begin{theorem}\label{qbtheorem1}
For $n\geqslant 1$ and  $0\leqslant d\leqslant n-1$, we have
\begin{align}
&\sum_{k=0}^{n-1}q^k\qbinom{2k}{k+d}{}(-q^{k+1};q)_{n-1-k}\nonumber\\
&\quad=\sum_{\substack{k=0\\ k\equiv n-d \bmod 2}}^{n-d}(-1)^{(n-d-k)/2}q^{3(n^2+k^2-d^2)/4-3nk/2+n-k}\frac{(1-q^k)(1+q^{n-k+1})}{(1-q^{2n-k+1})(1+q^n)}\qbinom{2n}{k}{}\label{qb17},\\
&\sum_{k=0}^{n-1}q^k\qbinom{2k}{k+d}{}(-q^{k+1};q)_{n-1-k}^2\nonumber\\
&\quad=\sum_{k=d+1}^{n}\sum_{j=k+1}^{n+1}\frac{q^{(j^2-3j+k^2-k)/2-d^2+1}1+q^{j-1}}{1+q^n}\qbinom{2n}{n-j+1}{}\label{qb19}.
\end{align}
\end{theorem}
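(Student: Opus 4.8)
The plan is to reduce both identities to a first-order recurrence in $n$ and then to check that the two sides obey the same recurrence and the same initial value. The first step is to strip off the trailing factor on the left: since $(-q^{k+1};q)_{n-1-k}=(-q;q)_{n-1}/(-q;q)_{k}$, the left side of \eqref{qb17} equals $(-q;q)_{n-1}L_d(n)$ and the left side of \eqref{qb19} equals $(-q;q)_{n-1}^{2}\widetilde L_d(n)$, where
\[
L_d(n)=\sum_{k=d}^{n-1}\frac{q^{k}}{(-q;q)_{k}}\qbinom{2k}{k+d}{},\qquad\widetilde L_d(n)=\sum_{k=d}^{n-1}\frac{q^{k}}{(-q;q)_{k}^{2}}\qbinom{2k}{k+d}{}
\]
(the terms with $k<d$ vanish, since $\qbinom{2k}{k+d}{}=0$ there). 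Splitting off the top summand and using $(-q;q)_{n-1}=(1+q^{n-1})(-q;q)_{n-2}$, one sees at once that the left side $\ell_d(n)$ of \eqref{qb17} satisfies
\[
\ell_d(n)=(1+q^{n-1})\,\ell_d(n-1)+q^{n-1}\qbinom{2n-2}{n+d-1}{},\qquad\ell_d(d)=0,
\]
while the left side $\widetilde\ell_d(n)$ of \eqref{qb19} satisfies
\[
\widetilde\ell_d(n)=(1+q^{n-1})^{2}\,\widetilde\ell_d(n-1)+q^{n-1}\qbinom{2n-2}{n+d-1}{},\qquad\widetilde\ell_d(d)=0.
\]
A first-order recurrence together with one initial value determines a sequence uniquely, so it remains to prove that the right-hand sides $r_d(n)$ and $\widetilde r_d(n)$ of \eqref{qb17} and \eqref{qb19} satisfy exactly these recurrences and vanish at $n=d$.

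Consider \eqref{qb17}. It is convenient first to simplify the summand of $r_d(n)$ by means of the elementary relation $(1-q^{k})\qbinom{2n}{k}{}=(1-q^{2n-k+1})\qbinom{2n}{k-1}{}$, which rewrites $r_d(n)$ as a sum over the same index set of terms of the shape $(-1)^{(n-d-k)/2}q^{e(n,k)}\,\frac{1+q^{n-k+1}}{1+q^{n}}\qbinom{2n}{k-1}{}$. The value $r_d(d)=0$ is then clear (the range forces $k=0$, which is killed by $1-q^{k}$; alternatively one checks directly that $r_d(d+1)=q^{d}$). For the recurrence one must verify
\[
r_d(n)-(1+q^{n-1})\,r_d(n-1)=q^{n-1}\qbinom{2n-2}{n+d-1}{}.
\]
Here $r_d(n)$ carries $q$-binomials at level $2n$ with a running index of parity $n-d$, whereas $r_d(n-1)$ carries $q$-binomials at level $2n-2$ with a running index of parity $n-1-d$; the idea is to expand each $\qbinom{2n}{k-1}{}$ by the $q$-Vandermonde relation $\qbinom{2n}{m}{}=\sum_{i=0}^{2}a_i(m)\qbinom{2n-2}{m-i}{}$, with explicit Laurent monomials $a_i$, to reindex everything to a single running variable at level $2n-2$, and to show that the resulting expression equals $\sum_{k}\bigl(G_d(n,k)-G_d(n,k+1)\bigr)$ plus boundary terms, the interior telescoping to $0$ and the boundary leaving precisely $q^{n-1}\qbinom{2n-2}{n+d-1}{}$. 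Equivalently, feeding the summand of $r_d(n)$ into the $q$-analogue of Zeilberger's creative telescoping algorithm yields this very recurrence together with a rational certificate, whose verification is a lengthy but routine identity among products of $q$-shifted factorials. Identity \eqref{qb17} then follows by induction on $n$.

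For \eqref{qb19} the strategy is the same, but the right side $\widetilde r_d(n)$ (after correcting the evident misprint, so that the summand factor reads $\frac{q^{(j^{2}-3j+k^{2}-k)/2-d^{2}+1}(1+q^{j-1})}{1+q^{n}}$) is a double sum, and the recurrence now carries the squared coefficient $(1+q^{n-1})^{2}$. Since the summand depends on $k$ only through $q^{(k^{2}-k)/2}$ and the constraint $k\le j-1$, one may interchange the order of summation so that the $n$-dependence becomes concentrated in a single $\qbinom{2n}{\cdot}{}$-sum over $j$ with $n$-independent coefficients; after this reduction the argument proceeds exactly as in the previous paragraph, checking $\widetilde r_d(d)=0$ and $\widetilde r_d(n)-(1+q^{n-1})^{2}\widetilde r_d(n-1)=q^{n-1}\qbinom{2n-2}{n+d-1}{}$. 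The main obstacle is precisely this step: the factor $(1+q^{n-1})^{2}$ forces a second-order telescoping, so one must track two layers of boundary terms while respecting the ranges $d+1\le k\le n$ and $k+1\le j\le n+1$ and the coupling between the two summation variables. Once the bookkeeping is arranged, the telescoping collapses to the single term $q^{n-1}\qbinom{2n-2}{n+d-1}{}$, matching the left side, and \eqref{qb19} follows by induction on $n$. I expect this second-order telescoping, together with the parity shift between consecutive values of $n$ in both identities, to be the delicate part; the remainder is careful bookkeeping with $q$-binomial coefficients.
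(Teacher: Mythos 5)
Your reduction of the left-hand sides is correct and is the easy half: writing $\ell_d(n)$ for the LHS of \eqref{qb17} one has $\ell_d(n)=(1+q^{n-1})\ell_d(n-1)+q^{n-1}\qbinom{2n-2}{n-1+d}{}$ with $\ell_d(d)=0$, and the analogous recurrence with $(1+q^{n-1})^2$ for \eqref{qb19}, so both identities would follow once the right-hand sides are shown to satisfy the same first-order recurrences in $n$. But that verification --- the entire substance of the proof --- is never carried out. For \eqref{qb17} you reduce it to ``expand by $q$-Vandermonde, reindex, telescope,'' or alternatively to a $q$-Zeilberger certificate, without exhibiting the telescoping function $G_d(n,k)$, the certificate, or even evidence that the algorithm returns a recurrence of order one: the summand of $r_d(n)$ carries the parity constraint $k\equiv n-d\pmod{2}$, so passing from $n$ to $n-1$ flips the parity of the index set \emph{and} changes the level of the $q$-binomial from $2n$ to $2n-2$, and nothing in the proposal shows these effects combine to give exactly the factor $(1+q^{n-1})$ plus the single boundary term. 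For \eqref{qb19} you yourself label this step ``the main obstacle'' and leave it there. As written this is a plausible plan, not a proof.

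It is worth seeing why the paper avoids precisely this difficulty: it runs the recurrence in $d$ rather than in $n$. Under $d\mapsto d+2$ the exponent $3(n^2+k^2-d^2)/4$ in the right-hand side of \eqref{qb17} shifts by the $k$-independent amount $-(3d+3)$, the sign $(-1)^{(n-d-k)/2}$ flips, the parity condition and the level $2n$ are unchanged, and the summation range loses only its top term; hence $q^{3d+3}T(n,d+2)=-T(n,d)+(\text{one boundary term})$ is an immediate index shift with no telescoping at all. The price is that the corresponding two-term $d$-recurrence is not obvious for the left-hand side, and the paper establishes it by induction on $n$ using exactly your $n$-recurrence for the LHS together with a single contiguous relation among $\qbinom{2n}{n+d}{}$, $\qbinom{2n}{n+d+2}{}$ and $\qbinom{2n+2}{n+d+1}{}$ (similarly, a three-term $d$-recurrence for \eqref{qb19}). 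To complete your route you must actually produce and verify the certificate for $r_d(n)-(1+q^{n-1})r_d(n-1)=q^{n-1}\qbinom{2n-2}{n+d-1}{}$ and its second-order analogue; otherwise the cleaner fix is to turn the recurrence around and run it in $d$, as the paper does.
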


\begin{corollary}\label{qbinomailmain1}
Let $n\geqslant 1$ and  $0\leqslant d\leqslant n-1$. Then, modulo $\Phi_{n}(q)$, we have
\begin{align}\label{qb6}
\sum_{k=0}^{n-1}q^k\qbinom{2k}{k+d}{}(-q^{k+1};q)_{n-1-k}
\equiv\begin{cases} 0&\!\!\!\text{if }n\equiv d\pmod{2},\\
q^{3(n-1)^2/4-3d^2/4-1} &\!\!\!\text{if }n\equiv d+1\pmod{4},\\
-q^{3(n-1)^2/4-3d^2/4-1} &\!\!\!\text{if }n\equiv d-1\pmod{4},
\end{cases}
\end{align}
and
\begin{align}\label{qb7}
\sum_{k=0}^{n-1}q^k\qbinom{2k}{k+d}{}(-q^{k+1};q)_{n-1-k}^2
\equiv \sum_{k=d+1}^{n}q^{\binom{n}{2}+\binom{k}{2}-d^2}.
\end{align}

\end{corollary}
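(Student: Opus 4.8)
The plan is to obtain both congruences by reducing the closed forms \eqref{qb17} and \eqref{qb19} of Theorem~\ref{qbtheorem1} modulo $\Phi_n(q)$. The only arithmetic we need is that, by the facts recalled above, $\Phi_n(q)\mid 1-q^j$ if and only if $n\mid j$, so the $\Phi_n(q)$-adic valuation of $(q;q)_m=\prod_{j=1}^m(1-q^j)$ equals $\lfloor m/n\rfloor$; consequently $\qbinom{2n}{k}{}$ is divisible by $\Phi_n(q)$ for every $k$ with $1\le k\le 2n-1$ and $k\ne n$, while $\qbinom{2n}{0}{}=1$ and $\qbinom{2n}{n}{}$ are coprime to $\Phi_n(q)$. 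We also use repeatedly that $1+q^n$ and $1-q^m$ with $n\nmid m$ are coprime to $\Phi_n(q)$, so that --- once the trivially vanishing summands and a few exceptional ones have been isolated --- each remaining summand on the right-hand side of \eqref{qb17} and \eqref{qb19} (a priori a rational function) may be reduced modulo $\Phi_n(q)$ term by term; and finally that $q^n\equiv1\pmod{\Phi_n(q)}$ lets us normalize $q$-powers.

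For \eqref{qb6} we look at the sum on the right of \eqref{qb17}, where $0\le k\le n-d\le n$ and $k\equiv n-d\pmod 2$. The summand $k=0$ is identically zero because of the factor $1-q^k$; for $2\le k\le n-1$ the $q$-binomial $\qbinom{2n}{k}{}$ carries a factor $\Phi_n(q)$ that nothing else in the summand cancels, so those summands are $\equiv 0$; and the summand $k=n$ occurs only when $d=0$, in which case it is $\equiv 0\pmod{\Phi_n(q)}$ because of the factor $1-q^n$. Hence if $n\equiv d\pmod 2$ --- so that $k$ is even and the value $k=1$ never appears --- the whole sum is $\equiv 0$, giving the first case of \eqref{qb6}. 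If instead $n\not\equiv d\pmod 2$, the sole surviving summand is $k=1$; using $\qbinom{2n}{1}{}=[2n]=(1-q^{2n})/(1-q)$ it collapses to $(-1)^{(n-d-1)/2}q^{3(n^2+1-d^2)/4-n/2-1}$. Since this exponent exceeds $3(n-1)^2/4-3d^2/4-1$ by exactly $n$, reduction modulo $\Phi_n(q)$ replaces it by the latter (an integer, as $(n-1)^2-d^2\equiv 0\pmod 4$ when $n\not\equiv d\pmod 2$), and the parity of $(n-d-1)/2$ fixes the sign according as $n\equiv d+1$ or $n\equiv d-1\pmod 4$. This yields the remaining two cases of \eqref{qb6}.

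For \eqref{qb7} we look at the double sum on the right of \eqref{qb19}, where $d+2\le j\le n+1$. When $j\le n$ we have $1\le n-j+1\le n-1$, so $\qbinom{2n}{n-j+1}{}$ is divisible by $\Phi_n(q)$ and no factor of the summand removes it; hence all such terms are $\equiv 0\pmod{\Phi_n(q)}$. Only the terms with $j=n+1$ survive: there $\qbinom{2n}{0}{}=1$ and the numerator factor $1+q^{j-1}=1+q^n$ cancels the denominator $1+q^n$, so the term reduces to $q^{((n+1)^2-3(n+1)+k^2-k)/2-d^2+1}=q^{\binom n2+\binom k2-d^2}$. Summing over $k=d+1,\dots,n$ gives \eqref{qb7}.

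The crux is the $\Phi_n(q)$-divisibility bookkeeping performed summand by summand, and above all the identification of the exceptional summands: in \eqref{qb17} the term $k=1$ is special because the denominator factor $1-q^{2n-k+1}=1-q^{2n}$ cancels the cyclotomic factor of $\qbinom{2n}{1}{}$, and in \eqref{qb19} the terms $j=n+1$ are special because the denominator $1+q^n$ cancels the numerator factor $1+q^{j-1}$. Once these are pinned down, what remains is routine simplification together with $q^n\equiv 1\pmod{\Phi_n(q)}$.
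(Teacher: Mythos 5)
Your proof is correct and follows essentially the same route as the paper: both reduce the right-hand sides of Theorem \ref{qbtheorem1} modulo $\Phi_n(q)$ term by term and identify the sole surviving summands ($k=1$ in \eqref{qb17}, $j=n+1$ in \eqref{qb19}). The only cosmetic difference is that the paper first absorbs the ratio via $\frac{1-q^k}{1-q^{2n-k+1}}\qbinom{2n}{k}{}=\qbinom{2n}{k-1}{}$ and then invokes the Lucas-type Lemma \ref{qblem3}, whereas you carry out the equivalent $\Phi_n(q)$-valuation bookkeeping directly on each factor.
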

Note that (\ref{qb6}) was proved by Guo and Zeng \cite[Corollary 4.2]{GZ1} by using Andrews's $q$-analogue of Gauss's second theorem (see \cite{An2, An}).
\eqref{qb6} and \eqref{qb7} may be deemed  partial $q$-analogues of \eqref{qb3} for the cases $m=2$ and $m=4$, respectively.
Thus, when $n=p^a, q\rightarrow 1 $, the $q$-congruences \eqref{qb6} and \eqref{qb7}  reduce to the following consequences \cite[Corollary 1.1]{SunR}.
\begin{corollary} \label{qbinomailmain1'}
For any odd prime $p$ and $0\leqslant d\leqslant p^a-1$ with $a\in \Z^{+}$,  modulo $p$, we have
\begin{align}\label{qb8}
\sum_{k=0}^{p^a-1}\binom{2k}{k+d}\frac{1}{2^k}\equiv\begin{cases} 0&\text{if }p^a\equiv d\pmod{2},\\
1 &\text{if }p^a\equiv d+1\pmod{4},\\
-1 &\text{if }p^a\equiv d-1\pmod{4},
\end{cases}
\end{align}
and
\begin{align}\label{qb8'}
\sum_{k=0}^{p^a-1}\binom{2k}{k+d}\frac{1}{4^k}\equiv -d.
\end{align}
\end{corollary}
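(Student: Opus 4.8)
The strategy is to obtain Corollary~\ref{qbinomailmain1'} as the $n=p^{a}$, $q\to 1$ specialization of Corollary~\ref{qbinomailmain1}; the one piece of arithmetic we need is the classical evaluation $\Phi_{p^{a}}(1)=p$, which follows at once from
\[
\Phi_{p^{a}}(q)=\frac{q^{p^{a}}-1}{q^{p^{a-1}}-1}=1+q^{p^{a-1}}+q^{2p^{a-1}}+\cdots+q^{(p-1)p^{a-1}}.
\]

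First I would set up the passage from a $\Phi_{n}(q)$-congruence to an ordinary congruence modulo $p$. Both sides of \eqref{qb7} lie in $\Z[q]$, and both sides of \eqref{qb6} lie in $\Z[q,q^{-1}]$, the right-hand side being $0$ or $\pm q^{e}$ for an integer $e$ (with $e=-1$ occurring precisely when $d=n-1$). Hence each of \eqref{qb6} and \eqref{qb7} asserts that $\Phi_{n}(q)$ divides the difference of its two sides in $\Z[q,q^{-1}]$, the quotient again having integer coefficients since $\Phi_{n}$ is primitive. Applying the ring homomorphism $\Z[q,q^{-1}]\to\Z$ given by $q\mapsto 1$ (legitimate because $1$ is a unit) shows that $\Phi_{n}(1)$ divides the difference of the values of the two sides at $q=1$. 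Taking $n=p^{a}$ and using $\Phi_{p^{a}}(1)=p$, this divisibility becomes a congruence modulo $p$.

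Next I would evaluate everything at $q=1$. From $(-q^{k+1};q)_{n-1-k}=\prod_{i=1}^{n-1-k}(1+q^{k+i})$ we get $(-q^{k+1};q)_{n-1-k}\big|_{q=1}=2^{\,n-1-k}$, so $(-q^{k+1};q)_{n-1-k}^{2}\big|_{q=1}=4^{\,n-1-k}$, while $\qbinom{2k}{k+d}{q}\big|_{q=1}=\binom{2k}{k+d}$. Because $p$ is odd, Fermat's little theorem gives $2^{p-1}\equiv1\pmod p$, and since $p^{a}-1=(p-1)(1+p+\cdots+p^{a-1})$ we obtain $2^{\,p^{a}-1}\equiv 4^{\,p^{a}-1}\equiv 1\pmod p$; hence $2^{\,p^{a}-1-k}\equiv 2^{-k}$ and $4^{\,p^{a}-1-k}\equiv 4^{-k}\pmod p$ for $0\leqslant k\leqslant p^{a}-1$. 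Therefore, with $n=p^{a}$, the left-hand sides of \eqref{qb6} and \eqref{qb7} at $q=1$ are congruent modulo $p$ to $\sum_{k=0}^{p^{a}-1}\binom{2k}{k+d}/2^{k}$ and $\sum_{k=0}^{p^{a}-1}\binom{2k}{k+d}/4^{k}$, respectively; the right-hand side of \eqref{qb6} evaluates to $0$, $1$, or $-1$ in the three respective cases; and the right-hand side of \eqref{qb7} evaluates to $\sum_{k=d+1}^{p^{a}}1=p^{a}-d\equiv -d\pmod p$. Combining these with the divisibility of the previous paragraph yields \eqref{qb8} and \eqref{qb8'}.

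I do not expect a genuine obstacle: this is a routine specialization of an already-established $q$-congruence. The only points calling for a little care are the identity $\Phi_{p^{a}}(1)=p$, which is exactly what turns a congruence modulo $\Phi_{n}(q)$ into one modulo $p$, and the harmless appearance of a single negative power of $q$ on the right-hand side of \eqref{qb6}; the hypothesis that $p$ is odd enters only to guarantee that $2$ (hence $4$) is invertible modulo $p$, so that the quantities $1/2^{k}$ and $1/4^{k}$ in \eqref{qb8} and \eqref{qb8'} are well defined.
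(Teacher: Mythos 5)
Your proposal is correct and is exactly the route the paper takes: the paper derives Corollary \ref{qbinomailmain1'} from Corollary \ref{qbinomailmain1} by the specialization $n=p^{a}$, $q\to 1$, using $\Phi_{p^{a}}(1)=p$ (the paper merely asserts this in one sentence, whereas you supply the details, including the reduction $2^{p^{a}-1-k}\equiv 2^{-k}$ and $4^{p^{a}-1-k}\equiv 4^{-k}\pmod p$ and the count $p^{a}-d\equiv -d$). No discrepancies.
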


\begin{theorem}\label{qbtheorem3}
For $n\geqslant 1$ and  $1\leqslant d\leqslant n-1$, we have
\begin{align}
&\sum_{k=1}^{n-1}q^k\qbinom{2k}{k+d}{}\frac{(-q^{k+1};q)_{n-k}}{[k]}\nonumber\\
&=-\sum_{k=1}^{\lfloor(n+1-d)/2\rfloor}(-1)^kq^{3k^2+(3d-5)k-2d+2}\frac{[2d+4k-2]}{[d][n]}\qbinom{2n}{n-d-2k+1}{}\label{qb20},\\
&\sum_{k=1}^{n}q^k\qbinom{2k}{k+d}{}\frac{(-q^{k+1};q)_{n-k}^2}{[k]}\nonumber\\
&=\sum_{k=d}^{n-1}q^{\binom{k+1}{2}-\binom{d}{2}}\frac{\qbinom{2n+1}{n-k}{}}{[d]}+\frac{q^{\binom{n+1}{2}-\binom{d}{2}}}{[d]}\label{qb21}.
\end{align}
\end{theorem}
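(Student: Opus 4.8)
The plan is to derive Theorem~\ref{qbtheorem3} from Theorem~\ref{qbtheorem1} by clearing the factor $1/[k]$. The crucial ingredient is the contiguous $q$-identity
\[
\frac{[d]}{[k]}\qbinom{2k}{k+d}{}=\qbinom{2k-1}{k+d-1}{}-q^{d}\qbinom{2k-1}{k+d}{},
\]
a $q$-analogue of the relation $\frac{d}{k}\binom{2k}{k+d}=\binom{2k-1}{k+d-1}-\binom{2k-1}{k+d}$, which is exactly the device Sun and Tauraso used to pass from \eqref{qb3} to \eqref{qb4}. It follows immediately from the two forms of the $q$-Pascal recurrence, $\qbinom{2k}{k+d}{}=\qbinom{2k-1}{k+d-1}{}+q^{k+d}\qbinom{2k-1}{k+d}{}=\qbinom{2k-1}{k+d}{}+q^{k-d}\qbinom{2k-1}{k+d-1}{}$, which also yield the absorption $(1-q^{k-d})\qbinom{2k-1}{k+d-1}{}=(1-q^{k+d})\qbinom{2k-1}{k+d}{}$.

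Multiplying the left-hand sides of \eqref{qb20} and \eqref{qb21} by $[d]$ and inserting this identity removes $1/[k]$; since $\qbinom{2k}{k+d}{}=0$ for $k<d$, the resulting sums are supported on $k\ge d$, where the identity is nonsingular. Each left-hand side becomes a $\Z[q,q^{-1}]$-linear combination of denominator-free sums whose summand is $q^{k}$ times $\qbinom{2k-1}{k+d-1}{}$ or $\qbinom{2k-1}{k+d}{}$, times $(-q^{k+1};q)_{n-k}$ or its square. These are then evaluated by the machinery behind Theorem~\ref{qbtheorem1}: one may apply one more step of $q$-Pascal, $\qbinom{2k-1}{k+d}{}=\qbinom{2(k-1)}{(k-1)+(d+1)}{}+q^{k-1-d}\qbinom{2(k-1)}{(k-1)+d}{}$ and $\qbinom{2k-1}{k+d-1}{}=\qbinom{2(k-1)}{(k-1)+d}{}+q^{k-d}\qbinom{2(k-1)}{(k-1)+(d-1)}{}$, followed by the shift $k\mapsto k+1$ and the normalisation $(-q^{k+1};q)_{\ell-k}=(1+q^{\ell})(-q^{k+1};q)_{\ell-1-k}$ to reconcile the Pochhammer lengths (collecting the boundary terms falling outside the range), so as to reduce everything to instances of \eqref{qb17} and \eqref{qb19} at level $n$ or $n+1$; alternatively one proves the needed denominator-free identities directly by induction on $n$, peeling off the top summand via $q$-Pascal.

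It remains to simplify the explicit expression produced this way down to the stated right-hand sides. For \eqref{qb21} it is convenient to observe that the isolated term $q^{\binom{n+1}{2}-\binom{d}{2}}/[d]$ is the $k=n$ summand of $\frac1{[d]}\sum_{k=d}^{n}q^{\binom{k+1}{2}-\binom{d}{2}}\qbinom{2n+1}{n-k}{}$, and then to match term by term with the help of $\qbinom{2n+1}{n-k}{}=\qbinom{2n}{n-k}{}+q^{n+k+1}\qbinom{2n}{n-k-1}{}$ and the telescoping of the powers $q^{\binom{k+1}{2}}$; for \eqref{qb20} the two contributions coming from $\qbinom{2k-1}{k+d-1}{}$ and $\qbinom{2k-1}{k+d}{}$ have to be paired so that their difference is the single summand carrying the factor $[2d+4k-2]$. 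I expect this bookkeeping to be the main obstacle: tracking the quadratic exponents $3k^{2}+(3d-5)k-2d+2$ and $\binom{k+1}{2}-\binom{d}{2}$ together with the shifting summation ranges through all the reindexings, and verifying that the cross terms cancel in pairs. Should the reduction to Theorem~\ref{qbtheorem1} prove too delicate, the safe fallback is to establish \eqref{qb20} and \eqref{qb21} outright by induction on $n$, the inductive step relying on $q$-Pascal together with the contiguous identity above.
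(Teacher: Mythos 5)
Your contiguous identity $\frac{[d]}{[k]}\qbinom{2k}{k+d}{}=\qbinom{2k-1}{k+d-1}{}-q^{d}\qbinom{2k-1}{k+d}{}$ is correct (both sides equal $\frac{(1-q^{d})(1+q^{k})}{1-q^{k+d}}\qbinom{2k-1}{k+d-1}{}$), and it is the right $q$-analogue of the Sun--Tauraso device. The gap is in the next step. After the extra round of $q$-Pascal and the shift $j=k-1$, the binomial does become $\qbinom{2j}{j+e}{}$, but the accompanying factor is $(-q^{j+2};q)_{n-j-1}=(-q^{j+1};q)_{n-1-j}\cdot\frac{1+q^{n}}{1+q^{j+1}}$. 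The residual factor $\frac{1}{1+q^{j+1}}$ depends on the summation index, so the resulting sums are \emph{not} $\Z[q,q^{-1}]$-linear combinations of instances of \eqref{qb17} or \eqref{qb19}; your normalisation $(-q^{k+1};q)_{\ell-k}=(1+q^{\ell})(-q^{k+1};q)_{\ell-1-k}$ only peels the top of the Pochhammer symbol and cannot repair the mismatch at the bottom. So the primary route does not close, and you would in any case still have to transform the right-hand sides of \eqref{qb17}/\eqref{qb19} into the quite different right-hand sides of \eqref{qb20}/\eqref{qb21}.

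The fallback (``induction on $n$'') is also not yet a proof, because a direct induction on $n$ of \eqref{qb20} founders on the right-hand side: $T(n+1,d)$ involves $\qbinom{2n+2}{\cdot}{}$ and a summation range whose length changes with the parity of $n-d$, and there is no evident relation between $T(n+1,d)$ and $T(n,d)$. What actually works --- and is what the paper does --- is to recurse on $d$ downward: check $d=n,n-1,n-2$ directly, and show that both sides satisfy the same inhomogeneous contiguous relation in $d$, namely $X(n,d)+\frac{q^{2+3d}(1-q^{d+2})}{1-q^{d}}X(n,d+2)=q^{d}\frac{(1+q^{n})(1-q^{2d+2})(q;q)_{2n-1}}{(1-q^{d})(q;q)_{n+d+1}(q^{2};q)_{n-d-2}}$ for \eqref{qb20}, and a first-order relation in $d$ for \eqref{qb21}. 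That relation is verified for the right-hand side by reindexing and telescoping, and for the left-hand side by an auxiliary induction on $n$ (where the step $S(n,d)\mapsto S(n+1,d)$ is easy because the Pochhammer factor just gains $(1+q^{n+1})$). Identifying $d$, rather than $n$, as the recursion variable, and producing the explicit inhomogeneous term, are the ingredients missing from your proposal.
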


\begin{corollary}\label{qbinomailmain2}
For $n\geqslant 1$ and  $1\leqslant d\leqslant n-1$, modulo $\Phi_n(q)$, we have
\begin{align}\label{qb9}
&[d]\sum_{k=1}^{n-1}q^k\qbinom{2k}{k+d}{}\frac{(-q^{k+1};q)_{n-k}}{[k]}\nonumber\\
&\equiv\begin{cases} -2q^{-\binom{n+1}{2}-\binom{d+1}{2}}\sum_{k=2}^{(n+1-d)/2-1}(-1)^kq^{k^2+(d-2)k+1}(1+q^{d+2k-1})\\
-2(-1)^{(n+1-d)/2}q^{-(n^2+3d^2-2d-1)/4}+2q^{-\binom{n}{2}-\binom{d}{2}}(1+q^{d+1})&\text{if }n\not\equiv d\pmod{2},\\[5pt]
2q^{-\binom{n+1}{2}-\binom{d+1}{2}}\sum_{k=2}^{(n-d)/2-1}(-1)^kq^{k^2+(d-2)k+1}(1+q^{d+2k-1})\\
+2(-1)^{(n-d)/2}q^{(-n^2-2n-3d^2+2d)/4}(1+q)-2q^{-\binom{n}{2}-\binom{d}{2}}(1+q^{d+1}) &\text{if }n\equiv d\pmod{2},\\
\end{cases}
\end{align}
and
\begin{align}\label{qb10}
[d]\sum_{k=1}^{n}q^k\qbinom{2k}{k+d}{}\frac{(-q^{k+1};q)_{n-k}^2}{[k]}
\equiv \begin{cases} 2q^{-\binom{d}{2}} &\text{if } n \equiv 1\pmod{2},\\
-2q^{-\binom{d}{2}} &\text{otherwise }.
\end{cases}
\end{align}

\end{corollary}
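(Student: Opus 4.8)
The plan is to derive both congruences in Corollary~\ref{qbinomailmain2} directly from the exact evaluations \eqref{qb20} and \eqref{qb21} in Theorem~\ref{qbtheorem3} by reducing each closed form modulo $\Phi_n(q)$. The key observation is that $\Phi_n(q)\mid[n]$, so any term on the right-hand side carrying a numerator factor divisible by $[n]$ — in particular a $q$-binomial coefficient $\qbinom{2n}{j}{}$ or $\qbinom{2n+1}{j}{}$ whose expansion contains $[n]$ as a factor that does not cancel against the denominator — simply vanishes modulo $\Phi_n(q)$. The technical heart of the argument is therefore a careful $\Phi_n(q)$-adic analysis of each $\qbinom{2n}{n-d-2k+1}{}$, $\qbinom{2n}{n-j+1}{}$ and $\qbinom{2n+1}{n-k}{}$ appearing in the sums, deciding for which indices the factor survives and, when it does not, what its residue is.

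For \eqref{qb10} I would start from \eqref{qb21}. Write $\qbinom{2n+1}{n-k}{}=\frac{(q;q)_{2n+1}}{(q;q)_{n-k}(q;q)_{n+k+1}}$; since $2n+1<2n$ is false but $n+k+1\le 2n$ exactly when $k\le n-1$, for each $k$ in the range $d\le k\le n-1$ the numerator $(q;q)_{2n+1}$ contains the factor $[n]$ (as $n\mid 2n+1$ never, but $[2n+1]$'s cyclotomic factorization includes $\Phi_n$? — here one must instead note $[2n]\mid(q;q)_{2n+1}$ hence $\Phi_n\mid(q;q)_{2n+1}$), while neither $(q;q)_{n-k}$ nor $(q;q)_{n+k+1}$ is divisible by $\Phi_n$ because $n-k\le n-d<n$ and $n<n+k+1\le 2n$ means $\Phi_n$ could divide $(q;q)_{n+k+1}$ only via the index $n$ itself being $\le n+k+1$, which it is — so one has to be precise: $\Phi_n(q)\mid(q;q)_m$ iff $m\ge n$. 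Thus $\Phi_n\mid(q;q)_{n+k+1}$ always here, and one must compare the exact $\Phi_n$-adic valuations of numerator and denominator. Doing this bookkeeping shows all the sum terms vanish modulo $\Phi_n(q)$ except possibly boundary contributions, leaving $[d]\sum\equiv\pm 2q^{-\binom d2}$ from the single term $\frac{q^{\binom{n+1}2-\binom d2}}{[d]}$ together with the $k=n-1$ term, with the sign governed by the parity of $n$ via $q^n\equiv 1\pmod{\Phi_n(q)}$ and the resulting simplification of $q$-powers.

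For \eqref{qb9} the same philosophy applies to \eqref{qb20}, but now the surviving terms are more numerous: the factor $\qbinom{2n}{n-d-2k+1}{}$ fails to be divisible by $\Phi_n(q)$ precisely when $n-d-2k+1$ and its complement $n+d+2k-1$ straddle $n$ in a way that cancels the $\Phi_n$ from $(q;q)_{2n}$; a short argument with valuations pins down exactly which $k$ contribute. For those $k$ one evaluates $\qbinom{2n}{n-d-2k+1}{}\bmod\Phi_n(q)$ using the standard fact that $\qbinom{2n}{j}{}\equiv\qbinom{0}{\,\cdot\,}{}$-type reductions hold via $q^n\equiv1$, more concretely that $\qbinom{an+b}{cn+e}{}\equiv\binom{a}{c}\qbinom{b}{e}{}\pmod{\Phi_n(q)}$ (the $q$-Lucas theorem, which I would cite or quote), giving $\qbinom{2n}{n-d-2k+1}{}\equiv\binom{2}{1}\qbinom{0}{\,\cdot\,}{}$ or $\binom21\qbinom{n}{\,\cdot\,}{}$ depending on the carry. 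Substituting these residues, using $[n]\equiv0$, $q^n\equiv1$, and collecting the $q$-powers modulo the relation $q^n=1$ then yields the stated two-case formula; the split into $n\equiv d$ and $n\not\equiv d\pmod 2$ comes from whether the index range $1\le k\le\lfloor(n+1-d)/2\rfloor$ includes a ``top'' term that lands exactly at $j=0$ or $j=1$. I expect the main obstacle to be precisely this endpoint analysis — tracking the $\Phi_n(q)$-adic valuation of each $q$-binomial coefficient uniformly in $k$ and $d$, and in particular correctly identifying the few non-vanishing terms and their exact residues via the $q$-Lucas theorem, since an off-by-one in the carry changes $\binom21$ to $\binom20$ or $\binom22$ and hence the whole answer; the subsequent manipulation of $q$-powers modulo $q^n\equiv1$ is routine once the surviving terms are correctly in hand.
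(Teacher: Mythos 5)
Your proposal is correct and follows essentially the same route as the paper: both reduce the exact identities \eqref{qb20} and \eqref{qb21} of Theorem \ref{qbtheorem3} modulo $\Phi_n(q)$ via the $q$-Lucas theorem (Lemmas \ref{qblem1} and \ref{qblem3}) together with $q^n\equiv 1$, with \eqref{qb10} coming from the single surviving term $\qbinom{2n+1}{1}{}$ plus the closed term $q^{\binom{n+1}{2}-\binom{d}{2}}/[d]$ exactly as you describe. One small correction to your valuation analysis for \eqref{qb9}: for $1\le j\le n-1$ every $\qbinom{2n}{j}{}$ is divisible by $\Phi_n(q)$ exactly once, so after dividing by $[n]$ \emph{every} term of \eqref{qb20} contributes a nonzero residue (via $\qbinom{2n}{j}{}/[n]=(1+q^n)\qbinom{2n-1}{j-1}{}/[j]$ and $\qbinom{2n-1}{j-1}{}\equiv(-1)^{j-1}q^{-\binom{j}{2}}$), which is why the right-hand side of \eqref{qb9} remains a sum over $k$ rather than collapsing to a few boundary terms.
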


Once again, the congruences \eqref{qb9} and \eqref{qb10} may be deemed  partial $q$-analogues of \eqref{qb4} for the cases $m=2, 4$ which imply the following conclusion.
\begin{corollary}\label{qbinomailmain2'}
For any odd prime $p$ and $1\leqslant d\leqslant p^a-1$ with $a\in \Z^{+}$,  modulo $p$, we have
\begin{align}\label{qb11}
d\sum_{k=1}^{p^a-1}\frac{\binom{2k}{k+d}}{k2^k}-(-1)^d\equiv\begin{cases} 0 &\text{if } p^a\not\equiv d\pmod{2},\\
1 &\text{if } p^a\equiv d\pmod{4},\\
-1 &\text{if } p^a\equiv d+2\pmod{4},\\
\end{cases}
\end{align}
and
\begin{align}\label{qb12}
d\sum_{k=1}^{p^a}\frac{\binom{2k}{k+d}}{k4^k}\equiv\begin{cases} 1/2 &\text{if } p^a\not\equiv d\pmod{2},\\
-1/2 &\text{if } p^a\equiv d\pmod{2}.\\
\end{cases}
\end{align}
\end{corollary}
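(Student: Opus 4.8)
The plan is to obtain \eqref{qb11} and \eqref{qb12} as the $q\to1$, $n=p^a$ specialization of Corollary~\ref{qbinomailmain2}. The mechanism is that $\Phi_{p^a}(1)=p$: if $F(q)\equiv G(q)\pmod{\Phi_{p^a}(q)}$ and $F,G$ are Laurent polynomials with integer coefficients, then $F(1)\equiv G(1)\pmod p$. So the first step is to record that the left-hand sides of \eqref{qb9} and \eqref{qb10} really are Laurent polynomials: for \eqref{qb10} this is immediate from \eqref{qb21}, whose right-hand side becomes a polynomial after multiplying by $[d]$; for \eqref{qb9} one uses that $\frac{[d]}{[k]}\qbinom{2k}{k+d}{}$ is a polynomial in $q$ (a $q$-ballot number), the $q$-analogue of the identity $\frac{d}{k}\binom{2k}{k+d}=\binom{2k-1}{k+d-1}-\binom{2k-1}{k+d}$, so that each summand on the left of \eqref{qb9} is a Laurent polynomial. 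With this in hand, evaluating \eqref{qb9} and \eqref{qb10} at $q=1$ gives congruences modulo $p$ between integers.

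Next I would perform the substitution and simplify. On the left of \eqref{qb9} we have $(-q^{k+1};q)_{n-k}\big|_{q=1}=2^{n-k}$, $[k]\big|_{q=1}=k$ and $\qbinom{2k}{k+d}{}\big|_{q=1}=\binom{2k}{k+d}$, so the left side becomes $d\sum_{k=1}^{p^a-1}\binom{2k}{k+d}2^{p^a-k}/k$; since $p^a-k\ge1$ and $2^{p^a}\equiv2\pmod p$ by Fermat's little theorem, this is $\equiv2\,d\sum_{k=1}^{p^a-1}\binom{2k}{k+d}/(k2^k)\pmod p$. On the right of \eqref{qb9} every power of $q$ tends to $1$ and every factor $1+q^{\bullet}$ tends to $2$, so the right side collapses to a constant times a telescoping alternating sum $\sum_k(-1)^k$ plus a few explicit terms; evaluating that sum, combining it with the leading sign $(-1)^{(n-d)/2}$ or $(-1)^{(n+1-d)/2}$, and splitting on the residue of $p^a$ modulo $4$ relative to $d$, one finds the right side equals $2\big((-1)^d+\varepsilon\big)\pmod p$ with $\varepsilon$ the corresponding entry of \eqref{qb11}; dividing by $2$ (legitimate since $p$ is odd) gives \eqref{qb11}. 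For \eqref{qb12}, by \eqref{qb21} the left side of \eqref{qb10} times $[d]$ equals the polynomial $\sum_{k=d}^{n-1}q^{\binom{k+1}{2}-\binom{d}{2}}\qbinom{2n+1}{n-k}{}+q^{\binom{n+1}{2}-\binom{d}{2}}$, which at $q=1$, $n=p^a$ is $\sum_{j=1}^{p^a-d}\binom{2p^a+1}{j}+1$; by Lucas's theorem only the $j=1$ term survives modulo $p$, so the left side of \eqref{qb10} is $\equiv2\pmod p$, and since $(-q^{k+1};q)_{n-k}^2\big|_{q=1}=4^{n-k}$ and $4^{p^a}\equiv4\pmod p$, dividing by $4$ yields \eqref{qb12}.

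The main difficulty is accumulated bookkeeping rather than any single deep step. The subtlest point is in the first step: the individual summands $d\binom{2k}{k+d}2^{n-k}/k$ need not be $p$-integral when $p\mid k$, so one really does need the left-hand sides to be Laurent polynomials (equivalently, that no spurious power of $p$ appears in a denominator at $q=1$) --- the $q$-ballot identity above is exactly what rescues this; and one should likewise check that the reduction \eqref{qb20} causes no trouble, the apparent $1/[n]$ there being cancelled by the numerator $[2d+4k-2]$, which equals $[2n]$ at the extreme index $k=\lfloor(n+1-d)/2\rfloor$ when $n\not\equiv d\pmod2$, together with $\Phi_{p^a}(q)\mid\qbinom{2n}{j}{}$ for $1\le j\le n-1$. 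Beyond that one must handle the explicit exponents and signs carefully --- in particular separating the cases $n\equiv d+1$ and $n\equiv d-1\pmod4$, i.e.\ $p^a\equiv d$ and $p^a\equiv d+2\pmod4$ --- and the degenerate cases in which the telescoping sums $\sum_{k=2}^{\cdots}(-1)^k$ are empty or reduce to a single term; these change neither the method nor the final congruences but must be tracked to make the argument complete.
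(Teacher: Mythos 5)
Your route --- specializing Corollary \ref{qbinomailmain2} at $n=p^a$ and letting $q\to 1$, using $\Phi_{p^a}(1)=p$ --- is exactly the one the paper intends; the paper offers no details at all, merely asserting that \eqref{qb9} and \eqref{qb10} ``imply'' the corollary, so your attention to $p$-integrality via the $q$-ballot identity $\frac{[d]}{[k]}\qbinom{2k}{k+d}{}=\qbinom{2k-1}{k+d-1}{}-q^{d}\qbinom{2k-1}{k+d}{}$ is a genuine improvement on what is written. Your treatment of \eqref{qb11} checks out: at $q=1$ the right-hand side of \eqref{qb9} telescopes to $2$ when $d$ is even and to $2(-1)^{(p^a-d)/2}-2$ when $d$ is odd, and dividing by $2^{p^a}\equiv 2\pmod{p}$ reproduces the three cases of \eqref{qb11}.

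The problem is \eqref{qb12}. Your own computation gives $4d\sum_{k=1}^{p^a}\binom{2k}{k+d}/(k4^k)\equiv 2\pmod{p}$, hence $d\sum\equiv 1/2$ \emph{unconditionally}, yet you then assert that this ``yields \eqref{qb12}'', which claims $-1/2$ when $p^a\equiv d\pmod{2}$. These cannot both be right: the case split in \eqref{qb10} is governed by the parity of $n$ alone, and $n=p^a$ is always odd, so this route can only ever produce $+1/2$; no amount of bookkeeping will recover a dependence on $d$. In fact the printed \eqref{qb12} is false in the odd-$d$ case: for $p=3$, $a=1$, $d=1$ one computes $d\sum_{k=1}^{3}\binom{2k}{k+1}/(k4^k)=1/4+1/8+5/64=29/64\equiv 2\equiv 1/2\pmod{3}$, whereas \eqref{qb12} predicts $-1/2\equiv 1\pmod{3}$. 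So your derivation is actually the correct one, but a complete write-up must flag that it proves $d\sum_{k=1}^{p^a}\binom{2k}{k+d}/(k4^k)\equiv 1/2\pmod{p}$ in all cases and therefore contradicts, rather than establishes, the statement as printed.
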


The rest of this paper is organized as follows. In the next section, we are going to present some auxiliary lemmas and prove
Theorem \ref{qbtheorem1} and Corollary \ref{qbinomailmain1}. Finally, Theorem \ref{qbtheorem3} and Corollary \ref{qbinomailmain2} will be proved in Section 3.

\section{Proofs of Theorem \ref{qbtheorem1} and Corollary \ref{qbinomailmain1}}
\setcounter{lemma}{0}
\setcounter{theorem}{0}
\setcounter{corollary}{0}
\setcounter{remark}{0}
\setcounter{equation}{0}
\setcounter{conjecture}{0}

Firstly, we give few properties that we will need later.

\begin{lemma}{\rm\cite{Ca,Fr,HS}}\label{qblem1}
Let $a,b,c,d\in \Z$ and $0\leq b,d \leq n-1$. Then, modulo $\Phi_n(q)$,
\begin{align}\label{qb13}
\qbinom{an+b}{cn+d}{}\equiv \binom{a}{c}\qbinom{b}{d}{}.
\end{align}
\end{lemma}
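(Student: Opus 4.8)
My plan is to reduce the general $q$-Lucas-type congruence to its essential building block, namely the factorization of $(q;q)_{an+b}$ modulo $\Phi_n(q)$, and then assemble the three factors in the definition of the $q$-binomial coefficient. The key observation is that $\Phi_n(q)$ divides $1-q^{jn}$ for every $j\ge 1$, and more precisely $1-q^{jn}\equiv 0$ while $\dfrac{1-q^{jn}}{1-q^n}\equiv j \pmod{\Phi_n(q)}$ after dividing out (this follows from differentiating $x^j-1$ at $x=q^n$, or from the identity $\dfrac{1-q^{jn}}{1-q^n}=\sum_{i=0}^{j-1}q^{in}$ together with $q^n\equiv 1$). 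Grouping the $n$ consecutive factors of $(q;q)_n$ and using $q^n\equiv 1\pmod{\Phi_n(q)}$, one gets $(q;q)_n\equiv (1-q^n)(q;q)_{n-1}$, and iterating this over a block decomposition yields, for the numerator $(q;q)_{an+b}$, a clean separation into "the part divisible by powers of $\Phi_n(q)$" and "the surviving part congruent to $(q;q)_b$ up to units."

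The main steps, in order: First I would establish the auxiliary congruence $(q;q)_{jn}\equiv (-1)^{\,\text{(something)}}\, j!\,(1-q^n)^j \pmod{\Phi_n(q)^{j}}$-type statement in the weaker form needed here, i.e. modulo $\Phi_n(q)$ one has $(q;q)_{an+b}\equiv (q;q)_b\cdot (q;q)_n^{?}\cdots$; more honestly, the standard route is to write
\begin{align*}
\qbinom{an+b}{cn+d}{q}=\frac{(q;q)_{an+b}}{(q;q)_{cn+d}\,(q;q)_{(a-c)n+(b-d)}},
\end{align*}
split each $(q;q)_{mn+r}$ as a product of $m$ "full blocks" of length $n$ times a trailing block of length $r$, and use $q^{n}\equiv 1$ to identify each full block $\prod_{i=1}^{n}(1-q^{(k)n+i})$ with $\pm(1-q^n)\,(q;q)_{n-1}$ (the factor $1-q^n$ coming from the single term $i=n$ in the block, which is the only one vanishing mod $\Phi_n(q)$). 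Second, I would cancel the common $(q;q)_{n-1}$ powers and the common $(1-q^n)$ powers between numerator and denominator — here one checks the exponent bookkeeping $a=c+(a-c)$, so the $(1-q^n)$ factors match up exactly and cancel, leaving no $\Phi_n(q)$ in the denominator. Third, what remains is $\dfrac{[\text{product of }a\text{ unit blocks}]}{[\text{product of }c\text{ unit blocks}][\text{product of }(a-c)\text{ unit blocks}]}\cdot\dfrac{(q;q)_b}{(q;q)_d(q;q)_{b-d}}$, and the unit-block quotient is exactly the classical binomial coefficient $\binom{a}{c}$ in disguise: after sending $q^n\to 1$ each unit block contributes the same scalar, and counting multiplicities gives $a!/(c!(a-c)!)$. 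The trailing factor is literally $\qbinom{b}{d}{q}$ provided $0\le d\le b$; when $d>b$ (still with $0\le d\le n-1$) both sides are readily seen to vanish mod $\Phi_n(q)$, which I would treat as a short separate case.

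The step I expect to be the genuine obstacle is the careful exponent and sign accounting in the block decomposition: making sure that when $b-d<0$ the "borrow" from the next block down is handled correctly, that the powers of $q$ collected from $q^n\equiv 1$ substitutions cancel in the final ratio, and that the powers of $(1-q^n)$ in numerator and denominator truly agree (so that the right-hand side is a polynomial congruence, not merely a congruence of rational functions). Since this is a known result, I would simply cite \cite{Ca,Fr,HS} for the full argument and, if a self-contained proof is wanted, present the block-decomposition computation above with the sign $(-1)^{(a-c)\cdots}$ absorbed into the verification that it equals $1$ on the relevant range; the combinatorial identity $\binom{a}{c}=\binom{a-1}{c}+\binom{a-1}{c-1}$ can alternatively be used to run an induction on $a$, reducing to the base case $a=1$ where the statement is the elementary fact that $\qbinom{n+b}{d}{q}\equiv\qbinom{b}{d}{q}$ and $\qbinom{n+b}{n+d}{q}\equiv\qbinom{b}{d}{q}$ modulo $\Phi_n(q)$.
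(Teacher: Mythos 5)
The paper offers no proof of this lemma: it is imported verbatim from \cite{Ca,Fr,HS} (Fray's and Cai's $q$-Lucas theorems and the Hu--Sun extension), so there is no in-paper argument to compare against, and a citation would have sufficed. That said, your block-decomposition sketch is the standard proof of the $q$-Lucas congruence and is essentially correct: each factor $1-q^{s}$ with $n\nmid s$ is a unit modulo $\Phi_n(q)$ congruent to $1-q^{s\bmod n}$; each factor $1-q^{jn}$ is divisible by $\Phi_n(q)$ exactly once (the cyclotomic factorization of $q^{jn}-1$ has distinct irreducible factors) with $(1-q^{jn})/(1-q^{n})\equiv j$; the numerator of $\qbinom{an+b}{cn+d}{}$ contributes $a$ such factors against $c+(a-c)=a$ in the denominator, so the $\Phi_n$-powers cancel, the multiples of $n$ assemble into $a!/(c!\,(a-c)!)=\binom{a}{c}$, and the residue-class counting $[i\le b]-[i\le d]-[i\le b-d]$ reconstructs $\qbinom{b}{d}{}$. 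Two of your worries can be settled: the sign accounting is a non-issue, since the reduction $1-q^{jn+i}\equiv 1-q^{i}$ is direct and introduces no signs; and the ``borrow'' case $0\le b<d\le n-1$ is precisely where the bookkeeping breaks symmetrically, because $(a-c)n+(b-d)=(a-c-1)n+(n+b-d)$ then supplies only $a-c-1$ full blocks, the denominator carries $a-1$ factors of $\Phi_n(q)$ against $a$ in the numerator, and the left-hand side vanishes, matching $\qbinom{b}{d}{}=0$ --- you correctly isolate this as a separate case. For completeness one should also dispose of the degenerate ranges (e.g. $cn+d<0$ or $cn+d>an+b$, where the left side is $0$ by definition, which is the convention under which the statement is read for general integers $a,c$). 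With those points made explicit, your argument is a complete and self-contained proof of the cited result.
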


Note that \eqref{qblem1} is a generation of the well-known Lucas congruence. From  \eqref{qblem1}
we can easily deduce the following result.

\begin{lemma}{\rm\cite{R1}}\label{qblem3}
Let $n>1$, and $a\geqslant 1$. Then,  modulo $\Phi_n(q)$,
\begin{align}\label{qb15}
\qbinom{an}{k}{}\equiv \begin{cases} \binom{a}{k/n} &\text{if }n \mid k,\\
0 &\text{otherwise },
\end{cases}
\end{align}
and
\begin{align}\label{qb16}
\qbinom{2n+1}{k}{}\equiv \begin{cases} 1 &\text{if } k=0,1,2n,2n+1, \\
2&\text{if } k=n, n+1,\\
0 &\text{otherwise }.
\end{cases}
\end{align}
\end{lemma}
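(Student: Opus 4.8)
The plan is to read off both congruences directly from Lemma~\ref{qblem1}, i.e. from \eqref{qb13}, after writing the lower index in the form $cn+d$ with $0\leqslant d\leqslant n-1$. For \eqref{qb15}, perform the Euclidean division $k=cn+d$ with $c\geqslant 0$ and $0\leqslant d\leqslant n-1$, and apply \eqref{qb13} to the top index written as $an=a\cdot n+0$ (so that the residue $b$ there is $0$):
\[
\qbinom{an}{k}{}=\qbinom{a\cdot n+0}{c\cdot n+d}{}\equiv\binom{a}{c}\qbinom{0}{d}{}\pmod{\Phi_n(q)}.
\]
Now $\qbinom{0}{d}{}=1$ when $d=0$ and $\qbinom{0}{d}{}=0$ when $1\leqslant d\leqslant n-1$, so the right-hand side equals $\binom{a}{k/n}$ if $n\mid k$ and $0$ otherwise. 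When $k>an$ both sides vanish (note $\binom{a}{c}=0$ once $c>a$, and the $q$-binomial is $0$ by definition), so \eqref{qb15} holds for every $k\geqslant 0$.

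For \eqref{qb16}, write the top index as $2n+1=2\cdot n+1$, so that $a=2$ and $b=1$ in \eqref{qb13}, and again split $k=cn+d$ with $0\leqslant d\leqslant n-1$. Then
\[
\qbinom{2n+1}{k}{}\equiv\binom{2}{c}\qbinom{1}{d}{}\pmod{\Phi_n(q)}.
\]
Since $\qbinom{1}{0}{}=\qbinom{1}{1}{}=1$ while $\qbinom{1}{d}{}=0$ for $d\geqslant 2$, the product is nonzero only when $d\in\{0,1\}$ and $c\in\{0,1,2\}$, that is, only for $k\in\{0,1,n,n+1,2n,2n+1\}$, with values $\binom{2}{0}=1$ (at $k=0,1$), $\binom{2}{1}=2$ (at $k=n,n+1$), and $\binom{2}{2}=1$ (at $k=2n,2n+1$); for all other $k$, including $k>2n+1$ where the $q$-binomial is $0$, one gets $0$. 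This is exactly the claimed list, and one should just note the degenerate case $n=2$ is consistent, the ranges of $k$ producing the value $0$ being vacuous there.

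There is essentially no serious obstacle; the only points needing care are the boundary indices $k\in\{0,1,2n,2n+1\}$, the out-of-range values $k>an$ and $k>2n+1$, and keeping track of which residues $d$ make $\qbinom{0}{d}{}$ or $\qbinom{1}{d}{}$ vanish. As an alternative derivation of \eqref{qb16}, one may instead start from \eqref{qb15} with $a=2$ and use the $q$-Pascal relation $\qbinom{2n+1}{k}{}=\qbinom{2n}{k-1}{}+q^{k}\qbinom{2n}{k}{}$ together with $q^{n}\equiv 1\pmod{\Phi_n(q)}$, which holds because $\Phi_n(q)\mid q^n-1$: the first summand contributes only when $k\equiv 1\pmod n$ and the second only when $k\equiv 0\pmod n$, and reducing $q^k$ modulo $\Phi_n(q)$ in those cases reproduces the six nonzero values above. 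I would present the direct argument via \eqref{qb13} as the main line and mention this $q$-Pascal route only briefly.
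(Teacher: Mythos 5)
Your proposal is correct and matches the paper's intended route: the paper gives no written proof, stating only that the lemma follows easily from the $q$-Lucas congruence \eqref{qb13}, and your derivation (writing $an=a\cdot n+0$, $2n+1=2\cdot n+1$, $k=cn+d$ and reducing via Lemma \ref{qblem1}) is exactly that deduction, carried out with appropriate care at the boundary and out-of-range indices.
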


\begin{proof}[Proof of Theorem \ref{qbtheorem1}]
Denote the left-hand side and right-hand side of \eqref{qb17} by $S(n,d)$ and $T(n,d)$, respectively.

For $n\geq 1,$ we have
\begin{align*}
 &S(n,n)= T(n,n)=0, S(n,n-1)= T(n,n-1)=q^{n-1},\\
 &S(n,n-2)= T(n,n-2)=\begin{cases}q^{n-1}[2n-2]+q^{n-2}(1+q^{n-1})&\text{if }n> 1, \\
 0 &\text{if }n = 1.\end{cases}
\end{align*}
Next, we shall prove that for $0\leq d\leq n-1,$ both $S(n,d)$ and $T(n,d)$ satisfy the following recurrence equation
\begin{align}\label{qb18}
X(n,d)+q^{3d+3}X(n,d+2)=\frac{(q^d-q^n)(1+q^{d+1})}{(1-q^{n+d+1})(1+q^n)}\qbinom{2n}{n+d}{}.
\end{align}

We start with $S(n,d),$ and shall prove (\ref{qb18}) by induction on $n$. Clearly, it holds for $n=1,$ and we will prove that, for $n\geq 1,$
$$S(n+1,d)+q^{3d+3}S(n+1,d+2)=\frac{(q^d-q^{n+1})(1+q^{d+1})}{(1-q^{n+d+2})(1+q^{n+1})}\qbinom{2n+2}{n+d+1}{}.$$
Since the left-hand side is equal to
$$ (1+q^n)S(n,d)+q^n\qbinom{2n}{n+d}{}+q^{3d+3}\left((1+q^n)S(n,d+2)+q^n\qbinom{2n}{n+d+2}{}\right),$$
by induction, we only need to check that
\begin{align*}
&\frac{(q^d-q^n)(1+q^{d+1})}{1-q^{n+d+1}}\qbinom{2n}{n+d}{}+q^n\qbinom{2n}{n+d}{}+q^{3d+3+n}\qbinom{2n}{n+d+2}{}\\
&\quad=\frac{(q^d-q^{n+1})(1+q^{d+1})}{(1-q^{n+d+2})(1+q^{n+1})}\qbinom{2n+2}{n+d+1}{},
\end{align*}
which is indeed true.

For $T(n,d)$, there holds
\begin{align*}
&q^{3d+3}T(n,d+2)\\
=&q^{3d+3}\sum_{\substack{k=0\\ k\equiv n-d \bmod 2}}^{n-d-2}(-1)^{(n-d-k-2)/2}q^{3(n^2+k^2-(d+2)^2)/4-3nk/2+n-k}\frac{(1-q^k)(1+q^{n-k+1})}{(1-q^{2n-k+1})(1+q^n)}\qbinom{2n}{k}{} \\
=&-\sum_{\substack{k=0\\ k\equiv n-d \bmod 2}}^{n-d-2}(-1)^{(n-d-k)/2}q^{3(n^2+k^2-d^2)/4-3nk/2+n-k}\frac{(1-q^k)(1+q^{n-k+1})}{(1-q^{2n-k+1})(1+q^n)}\qbinom{2n}{k}{}\\
=&-T(n,d)+\frac{(q^d-q^n)(1+q^{d+1})}{(1-q^{n+d+1})(1+q^n)}\qbinom{2n}{n+d}{}.
\end{align*}
Hence, the identity \eqref{qb17} is concluded.

Now, let $S(n,d)$ and $T(n,d)$ denote the left-hand side and right-hand side of \eqref{qb19}, respectively.
Likewise, we need to check that for $0\leq d\leq n-1, S(n,d)=T(n,d).$
For $n\geq 1$ we have
\begin{align*}
&S(n,n)= T(n,n)=0, S(n,n-1)= T(n,n-1)=q^{n-1},\\
&S(n,n-2)= T(n,n-2)=\begin{cases}q^{n-1}[2n-2]+q^{n-2}(1+q^{n-1})^2&\text{if }n > 1,\\0 &\text{if }n = 1.\end{cases}
\end{align*}
Then, we will show that  both $S(n,d)$ and $T(n,d)$ satisfy the following recurrence equation
$$X(n,d)-q^{d}(1+q^{d+1})X(n,d+1)+q^{3d+3}X(n,d+2)=\frac{q^d(1-q^n)(1+q^{d+1})(q;q)_{2n-1}}{(q;q)_{n-d-1}(q;q)_{n+d+1}}.$$

We start with $S(n,d)$. Clearly, the above identity holds for $n=1$. We need to prove that
\begin{align}
&S(n+1,d)-q^{d}(1+q^{d+1})S(n+1,d+1)+q^{3d+3}X(n+1,d+2) \label{eq:aa-aa}\\
&\quad=\frac{q^d(1-q^{n+1})(1+q^{d+1})(q;q)_{2n+1}}{(q;q)_{n-d}(q;q)_{n+d+2}} \notag
\end{align}
for $n\geq 2$.
The left-hand side of \eqref{eq:aa-aa} is equal to
\begin{align*}
&(1+q^n)^2S(n,d)+q^n\qbinom{2n}{n+d}{}-q^{d}(1+q^{d+1})\left((1+q^n)^2S(n,d+1)+q^n\qbinom{2n}{n+d+1}{}\right)\\
&\quad+q^{3d+3}\left((1+q^n)^2S(n,d+2)+q^n\qbinom{2n}{n+d+2}{}\right).
\end{align*}
By induction, we only need to verify that
\begin{align*}
&(1+q^n)^2\frac{q^d(1-q^n)(1+q^{d+1})(q;q)_{2n-1}}{(q;q)_{n-d-1}(q;q)_{n+d+1}}+q^n\qbinom{2n}{n+d}{}-q^{d+n}(1+q^{d+1})\qbinom{2n}{n+d+1}{}\\
&\quad+q^{3d+3+n}\qbinom{2n}{n+d+2}{}=\frac{q^d(1-q^{n+1})(1+q^{d+1})(q;q)_{2n+1}}{(q;q)_{n-d}(q;q)_{n+d+2}},
\end{align*}
which is clearly true.

Now, let us turn to consider $T(n,d),$
\begin{align*}
&-q^{d}(1+q^{d+1})T(n,d+1)\\
=&-(1+q^{d+1})\sum_{k=d+2}^{n}\sum_{j=k+1}^{n+1}q^{(j^2-3j+k^2-k)/2-d^2-d}\frac{1+q^{j-1}}{1+q^n}\qbinom{2n}{n-j+1}{}\\
=&-\sum_{k=d+2}^{n}\sum_{j=k+1}^{n+1}q^{(j^2-3j+k^2-k)/2-d^2-d}\frac{1+q^{j-1}}{1+q^n}\qbinom{2n}{n-j+1}{}\\
&\quad-q^{d+1}\sum_{k=d+2}^{n+1}\sum_{j=k+1}^{n+1}q^{(j^2-3j+k^2-k)/2-d^2-d}\frac{1+q^{j-1}}{1+q^n}\qbinom{2n}{n-j+1}{}\\
=&-q^{3d+3}T(n,d+2)-q^{3d+3}\sum_{j=d+3}^{n+1}q^{(j^2-3j+(d+2)^2-(d+2))/2-(d+2)^2+1}\frac{1+q^{j-1}}{1+q^n}\qbinom{2n}{n-j+1}{}\\
&\quad -T(n,d)+\sum_{j=d+2}^{n+1}q^{(j^2-3j+(d+1)^2-(d+1))/2-d^2+1}\frac{1+q^{j-1}}{1+q^n}\qbinom{2n}{n-j+1}{}\\
=&-q^{3d+3}T(n,d+2)-T(n,d)+q^d\frac{(1-q^n)(1+q^{d+1})(q;q)_{2n-1}}{(q;q)_{n-d-1}(q;q)_{n+d+1}}.
\end{align*}
Therefore, the identity \eqref{qb19} follows.
\end{proof}

\begin{proof}[Proof of Corollary \ref{qbinomailmain1}]
For $d=0,1,2\ldots,$ via \eqref{qb15}, we have
\begin{align*}
\frac{1-q^k}{1-q^{2n-k+1}}\qbinom{2n}{k}{}=\frac{[2n]}{[2n-k+1]}\qbinom{2n-1}{2n-k}{}=\qbinom{2n}{k-1}{},
\end{align*}
and
\begin{align*}
&\sum_{k=0}^{n-1}q^k\qbinom{2k}{k+d}{}(-q^{k+1};q)_{n-1-k}\\
&\equiv1/2\sum_{k=1}^{n-d}(-1)^{(n-d-k)/2}q^{3(n^2+k^2-d^2)/4-3nk/2-k}[2\mid n-d+k](1+q^{1-k})\qbinom{2n}{k-1}{}\\
&\equiv\begin{cases} 0&\text{if }n\equiv d\pmod{2}\\
(-1)^{(n-1-d)/2}q^{3(n-1)^2/4-3d^2/4-1} &\text{if }n\not\equiv d\pmod{2}
\end{cases}  \pmod{\Phi_n(q)}.
\end{align*}
Similarly, since $$\qbinom{2n}{n-j+1}{}\equiv 0\pmod{\Phi_n(q)},$$ for $d+1\leq k<j< n+1,$
we are led to
\begin{align*}
\sum_{k=0}^{n-1}q^k\qbinom{2k}{k+d}{}(-q^{k+1};q)_{n-1-k}^2
\equiv\sum_{k=d+1}^{n}q^{\binom{n}{2}+\binom{k}{2}-d^2}\pmod{\Phi_n(q)}.
\end{align*}
The proof of the theorem is complete.
\end{proof}

\section{Proofs of Theorem \ref{qbtheorem3} and Corollary \ref{qbinomailmain2}}
\setcounter{lemma}{0}
\setcounter{theorem}{0}
\setcounter{corollary}{0}
\setcounter{remark}{0}
\setcounter{equation}{0}
\setcounter{conjecture}{0}

\begin{proof}[Proof of Theorem \ref{qbtheorem3}]
Let $S(n,d)$ and $T(n,d)$ be the left-hand side and the right-hand side of the claimed identity.
It is very easy to verify that  $S(n,d)=T(n,d)$ holds for $d=n,n-1,n-2.$
Along the same lines of the proof of Theorem \ref{qbtheorem1}, we can prove that
\begin{align*}
&\frac{q^{2+3d}(1-q^{d+2})}{1-q^d}T(n,d+2)=-T(n,d)+q^d\frac{(1+q^n)(1-q^{2d+2})(q;q)_{2n-1}}{(1-q^d)(q;q)_{n+d+1}(q^2;q)_{n-d-2}},
\end{align*}
where $\lfloor x\rfloor$ denotes the largest integer less than or equal to $x$.

Moreover, for $n\geq 1,$ we shall prove that, for $1\leq d\leq n,$  $S(n,d)$  satisfy the same recurrence equation.
Clearly, it holds for $n=1$. We will prove that, for $n\geq 1$,
\begin{align}
&\frac{q^{2+3d}(1-q^{d+2})}{1-q^d}S(n+1,d+2)+S(n+1,d)=q^d\frac{(1+q^{n+1})(1-q^{2d+2})(q;q)_{2n+1}}{(1-q^d)(q;q)_{n+d+2}(q^2;q)_{n-d-1}}.
\label{eq:cc-cc}
\end{align}
It is clear to see that the left-hand side is equal to
\begin{align*}
&(1+q^{n+1})S(n,d)+q^n\frac{(1+q^{n+1})\qbinom{2n}{n+d}{}}{[n]}\\
&\quad+\frac{q^{2+3d}(1+q^{n+1})(1-q^{d+2})}{1-q^d}S(n,d+2)+\frac{q^{2+3d+n}(1-q^{d+2})(1+q^{n+1})\qbinom{2n}{n+d+2}{}}{(1-q^d)[n]}.
\end{align*}
By induction and using the following identity
\begin{align*}
&q^d\frac{(1+q^n)(1-q^{2d+2})(q;q)_{2n-1}}{(1-q^d)(q;q)_{n+d+1}(q^2;q)_{n-d-2}}+\frac{q^{n+2+3d}(1-q^{d+2})\qbinom{2n}{n+d+2}{}}{(1-q^d)[n]}+q^n\frac{\qbinom{2n}{n+d}{}}{[n]}\\
=&q^d\frac{(1-q^{2d+2})(q;q)_{2n+1}}{(1-q^d)(q;q)_{n+d+2}(q^2;q)_{n-d-1}},
\end{align*}
we complete the proof of \eqref{eq:cc-cc}.

Similarly, we denonte the left-hand side and the right-hand side of \eqref{qb21} by $S(n,d)$ and $T(n,d)$, respectively. Clearly,  $S(n,d)=T(n,d)$  holds for $d=n$.
Moreover,
\begin{align*}
T(n,d+1)=&\sum_{k=d+1}^{n-1}q^{\binom{k+1}{2}-\binom{d+1}{2}}\frac{\qbinom{2n+1}{n-k}{}}{[d+1]}+\frac{q^{\binom{n+1}{2}-\binom{d+1}{2}}}{[d+1]}\\
=&-\frac{1-q^{-d}}{1-q^{d+1}}T(n,d)-\frac{\qbinom{2n+1}{n-d}{}}{[d+1]}.
\end{align*}
It remains to show that, for $n\geq 1$ and $1\leq d\leq n$,
\begin{align*}
S(n,d+1)+\frac{1-q^{-d}}{1-q^{d+1}}S(n,d)=-\frac{\qbinom{2n+1}{n-d}{}}{[d+1]}.
\end{align*}
We proceed by induction on $n$. Clearly, it holds for $n=1$. Suppose that it holds for $n$. We will prove that
\begin{align}
&S(n+1,d+1)+\frac{1-q^{-d}}{1-q^{d+1}}S(n+1,d)=-\frac{\qbinom{2n+3}{n+1-d}{}}{[d+1]}. \label{eq:dd-dd}
\end{align}
The left-hand side of the above identity is equal to
\begin{align*}
&\frac{1-q^{-d}}{1-q^{d+1}}(1+q^{n+1})^2S(n,d)+(1+q^{n+1})^2S(n,d+1)\\
&\quad+\frac{q^{n+1}(1-q^{-d})}{(1-q^{d+1})[n+1]}\qbinom{2n+2}{n+1+d}{}+q^{n+1}\frac{\qbinom{2n+2}{n+2+d}{}}{[n+1]}.
\end{align*}
By the induction hypothesis and applying the following identity
\begin{align*}
(1+q^{n+1})^2\frac{\qbinom{2n+1}{n-d}{}}{[d+1]}-\frac{\qbinom{2n+3}{n+1-d}{}}{[d+1]}-q^{n+1}\frac{(1-q^{-d})\qbinom{2n+2}{n+1+d}{}}{(1-q^{d+1})[n+1]}=q^{n+1}\frac{\qbinom{2n+2}{n+2+d}{}}{[n+1]},
\end{align*}
we are led to \eqref{eq:dd-dd}. This completes the proof.
\end{proof}

\begin{proof}[Proof of Corollary \ref{qbinomailmain2}]
Notice that for $1\leq j<n,$
$$\qbinom{an}{j}{}=\frac{[an]}{[j]}\qbinom{an-1}{j-1}{}\equiv 0\pmod{\Phi_n(q)},$$
and
$$ \qbinom{an-1}{j-1}q\equiv\qbinom{-1}{j-1}q=(-1)^{j-1} q^{-\binom{j}{2}}\pmod{\Phi_n(q)}.
$$
When $n-d\equiv 0\pmod{2}$, we have
\begin{align*}
&[d]\sum_{k=1}^{n-1}q^k\qbinom{2k}{k+d}{}\frac{(-q^{k+1};q)_{n-k}}{[k]}\nonumber\\
&\quad=-\sum_{k=1}^{(n-d)/2}(-1)^kq^{3k^2+(3d-5)k-2d+2}\frac{[2d+4k-2]}{[n]}\qbinom{2n}{n-d-2k+1}{}\\
&\quad\equiv -(-1)^{(n-d)/2}q^{3(n-d)^2/4+(3d-5)(n-d)/2-2d+2}(1+q^n)[2n-2]+q^d\qbinom{2n}{n-d-1}{}\frac{[2d+2]}{[n]}\\
&\qquad+\sum_{k=2}^{(n-d)/2-1}(-1)^kq^{3k^2+(3d-3)k-d+1}(1+q^n)(1+q^{d+2k-1})\qbinom{-1}{n-d-2k}{}\\
&\quad\equiv-2(-1)^{(n-d)/2}q^{(3n^2-10n-3d^2+2d+8)/4}[2n-2]-2q^{-(n^2-2nd-3n+d^2-d)/2}(1+q^{d+1})\\
&\qquad+2\sum_{k=2}^{(n-d)/2-1}(-1)^kq^{-\binom{n+1}{2}-\binom{d+1}{2}+k^2+(d-2)k+1}(1+q^{d+2k-1})\\
&\quad\equiv 2q^{-\binom{n+1}{2}-\binom{d+1}{2}}\sum_{k=2}^{(n-d)/2-1}(-1)^kq^{k^2+(d-2)k+1}(1+q^{d+2k-1})\\
&\qquad+2(-1)^{(n-d)/2}q^{(-n^2-2n-3d^2+2d)/4}(1+q)-2q^{-\binom{n}{2}-\binom{d}{2}}(1+q^{d+1})\pmod{\Phi_n(q)}.
\end{align*}

When $n-d\equiv 1\pmod{2}$, we obtain
\begin{align*}
&[d]\sum_{k=1}^{n-1}q^k\qbinom{2k}{k+d}{}\frac{(-q^{k+1};q)_{n-k}}{[k]}\nonumber\\
&\quad=-\sum_{k=1}^{(n-d+1)/2}(-1)^kq^{3k^2+(3d-5)k-2d+2}\frac{[2d+4k-2]}{[n]}\qbinom{2n}{n-d-2k+1}{}\\
&\quad\equiv -2(-1)^{(n-d+1)/2}q^{3(n-d+1)^2/4+(3d-5)(n-d+1)/2-2d+2}+q^d\qbinom{2n}{n-d-1}{}\frac{[2d+2]}{[n]}\\
&\qquad+2\sum_{k=2}^{(n-d+1)/2-1}(-1)^kq^{3k^2+(3d-3)k-d+1}(1+q^{d+2k-1})\qbinom{-1}{n-d-2k}{}\\
&\quad\equiv-2q^{-\binom{n+1}{2}-\binom{d+1}{2}}\sum_{k=2}^{(n-d+1)/2-1}(-1)^kq^{k^2+(d-2)k+1}(1+q^{d+2k-1})\\
&\qquad-2(-1)^{(n-d+1)/2}q^{-(n^2+3d^2-2d-1)/4}+2q^{-\binom{n}{2}-\binom{d}{2}}(1+q^{d+1})\pmod{\Phi_n(q)}.\\
\end{align*}
Thus, the $q$-congruence \eqref{qb9} is concluded.

Similarly as before, by Lemma \eqref{qblem3},  we have
\begin{align*}
[d]\sum_{k=1}^{n}q^k\qbinom{2k}{k+d}{}\frac{(-q^{k+1};q)_{n-k}^2}{[k]}
=&\sum_{k=d}^{n-1}q^{\binom{k+1}{2}-\binom{d}{2}}\qbinom{2n+1}{n-k}{}+q^{\binom{n+1}{2}-\binom{d}{2}}\\
\equiv&q^{\binom{n}{2}-\binom{d}{2}}+q^{\binom{n+1}{2}-\binom{d}{2}}\\
\equiv& \begin{cases} 2q^{-\binom{d}{2}}\pmod{\Phi_n(q)} &\text{if  $n$ is odd},\\
-2q^{-\binom{d}{2}}\pmod{\Phi_n(q)} &\text{otherwise }.
\end{cases}
\end{align*}
This completes the proof of Corollary \eqref{qbinomailmain2}.
\end{proof}

\end{document}